\documentclass[12pt, letterpaper]{article}

\usepackage{amsmath, amssymb, amsthm}
\usepackage{graphicx} 
\usepackage{nickmath}
\usepackage{xcolor}
\usepackage[round]{natbib}
\usepackage{setspace}

\newtheorem{theorem}{Theorem}
\newtheorem{lemma}[theorem]{Lemma}
\newtheorem{proposition}[theorem]{Proposition}
\newtheorem{corollary}[theorem]{Corollary}
\theoremstyle{definition}
\newtheorem{definition}[theorem]{Definition}

\newtheorem{axiom}{Axiom}

\theoremstyle{remark}
\newtheorem{remark}{Remark}
\newtheorem{example}{Example}

\title{\bf On Extending Type-I Error to Data-Dependent Levels}
\author{Nick W. Koning\footnote{n.w.koning@ese.eur.nl, Econometric Institute, Erasmus University Rotterdam.}}
\date{22 June 2026}

\begin{document}	
	\maketitle

	\begin{abstract}
		The emerging literature on hypothesis testing with data-dependent and post-hoc significance levels relies on a particular extension of the Type-I error to data-dependent levels.
		Existing arguments for this extension are heuristic, and primarily motivated by a resulting connection to the E-value.
		Our main contribution is to argue that the extension is `right', by showing that it emerges from three axioms: within a large class of possible extensions it is the only extension that nests classical Type-I error validity for data-independent levels, preserves classical validity for data-dependent levels and is monotone in the strength of the rejection claim.
		As a second contribution, we apply this result to support the common definition of the E-value, by showing that it arises as the `right' notion of validity for the numerical representation of a generalized hypothesis test that may reject at different data-driven significance levels.
	\end{abstract}

	\section{Introduction}
		The hypothesis testing framework of Neyman \& Pearson has developed into one of the methodological pillars of modern empirical research.
		The central idea in this framework is to choose a test that is \emph{valid} at some significance level $\alpha$:
		\begin{align*}
			\textnormal{Type-I error} \equiv P(\textnormal{test falsely rejects the hypothesis}) \leq \alpha.
		\end{align*}
		
		In this classical framework, it is mandatory to choose the significance level in advance, or at least independently from the data.
		The standard argument is that for a data-driven choice of the level $\widetilde{\alpha}$, there is generally no hope that the (conditional) Type-I error given the chosen level $\widetilde{\alpha} = a$ is always smaller than $a$
		\begin{align}\label{ineq:distortion}
			P(\textnormal{test falsely rejects the hypothesis} \mid \widetilde{\alpha} = a) \leq a.
		\end{align}
		
		This classical Neyman \& Pearson framework was recently shaken by a series of papers on testing with data-dependent and even fully post-hoc selected significance levels.
		Showing that \eqref{ineq:distortion} is indeed hopeless, the key idea in this literature is to settle for a weaker target: controlling the expected distortion ratio between the conditional Type-I error and the reported level
		\begin{align}\label{ineq:expected_distortion_ratio}
			\Ex^P\left[\frac{P(\textnormal{test falsely rejects hypothesis} \mid \widetilde{\alpha})}{\widetilde{\alpha}}\right] \leq 1.
		\end{align}
		
		Unfortunately, the arguments for this extension of the Type-I error to data-dependent levels are heuristic, and perhaps primarily driven by a resulting connection between the E-value and post-hoc validity (see Section \ref{sec:E-value}).
		
		This leaves open the question: 
		\begin{quote}
			\emph{Is this the `right' extension of the classical Type-I error to data-dependent levels?}
		\end{quote}
		In particular: why should \eqref{ineq:expected_distortion_ratio} involve a ratio? And why should it be bounded in expectation?
		We believe that answering these questions is critical in order to convince anyone to go beyond a framework as well established as that of Neyman \& Pearson.
			
		\subsection{Contributions}
			The main contribution of this paper is to show that \eqref{ineq:expected_distortion_ratio} is indeed the natural extension of the Type-I error to data-dependent levels.
			
			To support this claim, we show that, within a broad framework, it is the only extension of the Type-I error to data-dependent levels that satisfies three axioms.
			We provide a detailed discussion of these axioms in Section \ref{sec:data-dependent_levels}, and briefly state them here in words.
			
			The extension should:
			\begin{enumerate}
				\item nest classical validity when using data-independent levels,
				\item preserve classical validity when using data-dependent levels,
				\item be monotone in the strength of the rejection claim.
			\end{enumerate}
			
			In particular, we find that the first axiom pins down the ratio within the expectation of \eqref{ineq:expected_distortion_ratio}, for any reasonable notion of aggregation.
			The second and third axioms together subsequently pin down the expectation among such aggregators.
			
			In Section \ref{sec:E-value}, we apply our findings to support the fundamental interpretation of the E-value as a generalization of a hypothesis test that may reject at different data-driven levels.
			The common definition of the E-value then emerges as the numerical representation of such a generalized test that is valid in a manner that satisfies these axioms.
			We subsequently show that the connection between post-hoc validity and E-values survives this more fundamental definition of the E-value.
			
			In Section \ref{sec:distinguishing_non-rejections}, we explore a slight generalization of our framework, in which we treat non-rejections at different levels as distinct decisions.
			However, within an expected-loss framework, we show that the combination of our nesting and preservation axioms forces all non-rejections to have the same loss.
			This supports treating non-rejections at different levels alike.
			
			\subsection{Literature review}
				To the best of our knowledge, the first connection between E-values and testing with data-dependent levels was made by \citet{wang2022false}, \citet{katsevich2020simultaneous} and \citet{xu2024post}, who find that E-values enable the post-hoc selection of the level in multiple testing.
				
				This was developed into a generalization of the classical Neyman--Pearson framework to data-dependent levels in an expected loss setting by \citet{grunwald2024beyond}.
				\citet{koning2023post} subsequently goes beyond expected loss by proposing a framework that conditions on the data-dependent level $\widetilde{\alpha}$ to capture \eqref{ineq:distortion}, introducing the expected distortion ratio \eqref{ineq:expected_distortion_ratio}, connecting to the p-value and studying optimality.
				\citet{koning2024continuous} introduces the idea to view the E-value as a generalization of a hypothesis test that rejects at some data-driven level, deriving an extension of the Neyman--Pearson lemma to E-values.
				\citet{chugg2026admissibility} study admissibility in the expected loss framework.
				
				Post-hoc inference has already been applied in several settings, including conformal prediction \citep{gauthier2025values, gauthier2026backward, koning2025fuzzy, zhu2026beyond}, knockoffs \citep{fischer2025knockoffs}, and equivalence testing \citep{koobs2026equivalence}.
				Moreover, \citet{hartog2025family}, \citet{xu2025bringing} and \citet{koning2026measure} consider multiple testing with a data-driven level, and \citet{chugg2026post} study post-hoc inference in an asymptotic framework.
				
				To illustrate how the literature struggles with the question of whether \eqref{ineq:arithmetic_mean_level} is the right extension of validity to data-dependent levels, \citet{gauthier2025values}, \citet{gauthier2026backward} and \citet{zhu2026beyond} instead focus on the extension 
				\begin{align}\label{ineq:arithmetic_mean_level}
					P(\textnormal{test falsely rejects hypothesis}) \leq \Ex^P[\widetilde{\alpha}].
				\end{align}
				We treat this extension \eqref{ineq:arithmetic_mean_level} in detail in Section \ref{sec:distinguishing_non-rejections}, where we show that it violates the combination of our axioms.

	\subsection{Notation and latent assumptions}
		We use $\X$ to denote our sample space.
		Our results are easily extended to the composite setting, so we focus on a simple (null) hypothesis $\{P\}$, where $P$ is a probability distribution on $\X$.
		
		Throughout, we assume that we have access to an external source of randomization.
		Formally, whenever needed, we assume we may enrich our probability space with a uniform random variable $U \sim \textnormal{Unif}[0, 1]$ that is independent from any other random variables being considered.
		We suppress this in our notation, for the sake of brevity.
		
	\section{Testing with data-dependent levels}\label{sec:data-dependent_levels}
		\subsection{From classical testing to data-dependent levels}
			Throughout, we compare tests across different significance levels.
			For this reason, it is important to treat rejections at different levels as different decisions.
			We capture these decisions in what we call an \emph{evidence space}, which is a totally ordered decision space $\D$ with a least element that we denote by $0$.
			We use this to define a level $\alpha \in (0, 1)$ test as a $\{0, d_\alpha\}$-valued random variable, where $0, d_\alpha \in \D$.
			Here, $d_\alpha$ represents the decision to reject at significance level $\alpha$ and $0$ represents a non-rejection.\footnote{In Section \ref{sec:distinguishing_non-rejections}, we argue that there is no need to distinguish non-rejections at different levels.}
			We further assume that $d_{\alpha^+} \leq d_{\alpha^-}$, for every $\alpha^- \leq \alpha^+$, expressing that a rejection at a smaller level is a `stronger decision'.
			
			To prepare for testing at data-dependent levels, we assume we have access to a family $\phi$ of level $\alpha$ tests $\phi(\alpha) : \X \to \{0, d_\alpha\}$ across different levels $\alpha \in (0, 1)$.
			Here, we say that $\phi$ is valid for $\alpha$ if the test $\phi(\alpha)$ is valid, in the sense that the Type-I error is below $\alpha$.
			
			\begin{definition}[Classical validity]
				$\phi$ is valid at level $\alpha$ if $P(\phi(\alpha) = d_\alpha) \leq \alpha$.
			\end{definition}
		
			We use $\widetilde{\alpha} : \X \to (0, 1)$ to denote an arbitrary data-dependent level.
			To test with a data-dependent level, we plug such a data-dependent level into $\phi$: $\phi(\widetilde{\alpha})$.
			This should be viewed as using the decision produced by the random variable $x \mapsto \phi(\widetilde{\alpha}(x))(x)$.
			
		\subsection{Axioms for validity}		
			While it is easy to define a test with a data-dependent level, it is not obvious how to extend validity of tests from data-independent to data-dependent levels.
			The key problem is that the object $\phi(\widetilde{\alpha})$ is no longer a test in the classical sense: its outcome is no longer binary $\{0, d\}$-valued for some $d \in \D$.
			Indeed, $\phi(\widetilde{\alpha})$ may produce a decision to reject at various different levels.

			To work towards a notion of validity for data-dependent levels, we formulate three axioms that we believe a reasonable extension of validity to data-dependent levels $\widetilde{\alpha}$ should satisfy.
			We have crafted these axioms so that they each have meaningful implications, and together pin down a notion of validity for data-dependent levels.
			
			\paragraph{Nesting classical validity}
				The first axiom is that the extension should nest classical validity: using a data-dependent level that happens to be independent of the data should coincide with classical validity.
				This axiom is natural, because if it is not satisfied then it is hard to even speak of an `extension' of classical validity.
							
				\begin{axiom}[Nesting classical validity]\label{axiom:nesting}
					$\phi$ is valid for $\widetilde{\alpha} \equiv \alpha$ if and only if $\phi$ is valid for $\alpha$. 
				\end{axiom}
				
			\paragraph{Preserving classical validity}
				The second axiom is that the extension should preserve classical validity when using a data-dependent level.
				In particular, we require that $\phi(\widetilde{\alpha})$ rejects at any \emph{prespecified} level $\alpha$ with unconditional probability at most $\alpha$.
								
				\begin{axiom}[Preserving classical validity]\label{axiom:preserving}
					If $\phi$ is valid for $\widetilde{\alpha}$ then $P(\phi(\widetilde{\alpha}) \geq d_\alpha) \leq \alpha$, for every $\alpha \in (0, 1)$.
				\end{axiom}
				
				Here, we use $\phi(\widetilde{\alpha}) \geq d_\alpha$, because for a test with a data-driven level, a rejection at level $\alpha$ corresponds to making a decision that is at least as strong as $d_\alpha$.
				Another way to interpret this axiom is that thresholding at $d_\alpha$ must produce a classically valid level $\alpha$ binary test: $\phi'(\alpha) = d_\alpha\ind{\phi(\widetilde{\alpha}) \geq d_\alpha}$.
				
				The practical implication of this axiom is that if we have an independent level $\alpha = 0.05$, then the outcome $\phi(\widetilde{\alpha}) = d_{0.03}$ can be used to claim a rejection at level $\alpha$ under the classical Type-I error.
				This connects the use of a data-dependent level to the classical Neyman--Pearson framework.
				
				\begin{remark}\label{rmk:link_preservation_nesting}
					Applying Axiom \ref{axiom:preserving} to $\widetilde{\alpha} \equiv \alpha$ does not imply Axiom \ref{axiom:nesting}, since Axiom \ref{axiom:nesting} is an if-and-only-if statement.
					We purposefully separate these axioms, as they play distinct roles in our results.
				\end{remark}
				
			\paragraph{Monotonicity}	
				The third axiom is monotonicity, which states that if a decision is valid then any weaker decision must also be valid.
				
				\begin{axiom}[Monotonicity]\label{axiom:monotonicity}
					If $\phi$ is valid for $\widetilde{\alpha}$ and $\phi(\widetilde{\alpha}') \leq \phi(\widetilde{\alpha})$, then $\phi$ is valid for $\widetilde{\alpha}'$.
				\end{axiom}
				
				A practical implication of this axiom is that we may freely discard evidence (post-hoc) without violating validity.
				For example, if we obtain the decision to reject at level $4.97\%$, it implies that we can report this as a rejection at level $5\%$ without losing validity.
				
				We provide two examples of the practical usefulness of discarding evidence in the context of post-hoc confidence (and prediction) sets \citep{koning2025fuzzy, chugg2026post, koobs2026equivalence}.
				For every parameter value $\theta$, a post-hoc confidence set reports a decision to exclude $\theta$ from the confidence set at its own individual data-driven level $\widetilde{\alpha}_\theta$, in contrast to classical confidence sets which exclude (or include) every parameter value at a single prespecified level $\alpha$.
				
				\begin{example}[Reporting a post-hoc confidence set]
					Compared to reporting a classical confidence set $C_\alpha$, it is practically challenging to report the precise data-driven level $\widetilde{\alpha}_\theta$ for every parameter value $\theta$.
					Discarding evidence provides a practical solution: it allows us to group different parameter values $\theta$ by rounding up their data-driven levels to some discrete values.
					
					For example, given some parameter $\theta \in \mathbb{R}$, this allows us to simultaneously say that the true parameter is covered by $[-1, 2]$ at level 0.10, by $[-2, 3]$ at level 0.05, and by $[-10, 15]$ at level 0.01, instead of having to report the precise data-driven level $\widetilde{\alpha}_\theta$ for every $\theta \in \mathbb{R}$.
				\end{example}
				
				\begin{example}[Forcing coherence]
					When reporting a data-driven level $\widetilde{\alpha}_H$ for a collection of hypotheses $H \in \mathcal{H}$, one may run into the problem that the rejection decisions are (logically) incoherent: we may reject $\theta^* \in H$ at level $\widetilde{\alpha}_H = 0.016$ and $\theta^* \in H'$ at level $\widetilde{\alpha}_{H'} = 0.02$, even though $H \supseteq H'$ \citep{koning2026measure}.
					The monotonicity axiom provides a practical way to enforce coherence (post-hoc) by rounding $\widetilde{\alpha}_H$ up to $0.02$, discarding the incoherent evidence.
					This rounding is used in \citet{koobs2026equivalence}, for a nested class of hypotheses $\mathcal{H}$.
				\end{example}

	\section{Starting point: expected loss}\label{sec:expected_loss}
		As a starting point, we consider a class of notions of validity that can be expressed as an expected loss bound.
		In particular, let $L : \D \to \mathbb{R}$ be an increasing `loss function', and let $C \in \mathbb{R}$.
		We then define validity of $\phi$ for a data-dependent level $\widetilde{\alpha}$ as
		\begin{align*}
			\phi \textnormal{ is valid for } \widetilde{\alpha} \iff \Ex^P[L(\phi(\widetilde{\alpha}))] \leq C.
		\end{align*}
		Here, we assume $C > L(0)$, which ensures there exist families of tests $\phi \not\equiv 0$ that are valid.
		
		It is convenient to normalize the loss function and threshold $C$, because different loss function-threshold pairs $(L, C)$ yield the same notion of validity.
		Indeed, any positive affine transformation $(L, C) \to (aL + b, aC + b)$, with $a > 0$ and $b \in \mathbb{R}$, leaves the validity criterion unchanged.
		We therefore represent such a class of loss function-threshold pairs by a single canonical choice.
		In particular, a pair $(L, C)$ can be represented by the pair $(\overline{L}, 1)$, where $\overline{L}(x) = (L(x) - L(0)) / (C - L(0))$.
		We then have that $\overline{L}(0) = 0$ and
		\begin{align*}
			\Ex^P[\overline{L}(\phi(\widetilde{\alpha}))] \leq 1 \iff \Ex^P[L(\phi(\widetilde{\alpha}))] \leq C.
		\end{align*}
		
		The expected distortion ratio \eqref{ineq:expected_distortion_ratio} corresponds to the choice $\overline{L}(d_\alpha) = 1/\alpha$ (see Remark \ref{rmk:link_loss_size_distortion}).
		Theorem \ref{thm:nested} shows that this is the only notion of validity for data-dependent levels that can be expressed as an expected loss bound and nests classical validity.
		The proof may be found in the appendix, alongside all other omitted proofs.
		
		\begin{theorem}\label{thm:nested}
			Consider a notion of validity that can be expressed as an expected loss bound.
			Then this notion nests classical validity if and only if $\overline{L}(d_\alpha) = 1/\alpha$ for every $\alpha \in (0, 1)$.
		\end{theorem}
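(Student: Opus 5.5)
With $\widetilde{\alpha} \equiv \alpha$, the family $\phi(\alpha)$ takes only the values $0$ and $d_\alpha$. Since $\overline{L}(0) = 0$, the expected loss reduces to
\begin{align*}
	\Ex^P[\overline{L}(\phi(\alpha))] = \overline{L}(d_\alpha)\, P(\phi(\alpha) = d_\alpha).
\end{align*}
Axiom \ref{axiom:nesting} therefore reads: for every level $\alpha$ test, $\overline{L}(d_\alpha)\, q \leq 1$ if and only if $q \leq \alpha$, where $q = P(\phi(\alpha) = d_\alpha)$. The plan is to show that this equivalence, holding over all attainable $q$, forces $\overline{L}(d_\alpha) = 1/\alpha$, and conversely that $1/\alpha$ makes it hold.

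\textbf{Sufficiency.} If $\overline{L}(d_\alpha) = 1/\alpha$, then $q/\alpha \leq 1 \iff q \leq \alpha$, which is immediate.

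\textbf{Necessity.} Fix $\alpha \in (0,1)$. The key device is the external randomization $U$. Take $\phi(\alpha) = d_\alpha \ind{U \leq q}$; this is a valid object of the required form, and its rejection probability is exactly $q$ for any chosen $q \in [0,1]$. So all $q \in [0,1]$ are attainable, and the condition becomes, for all $q \in [0,1]$,
\begin{align*}
	\overline{L}(d_\alpha)\, q \leq 1 \iff q \leq \alpha.
\end{align*}
We then argue in three steps.
\begin{enumerate}
	\item Write $\ell = \overline{L}(d_\alpha)$. Since $L$ is increasing and $d_\alpha \geq 0$, we have $\ell \geq 0$. We may take $\ell > 0$, because $\ell = 0$ would make every $q$ valid, including $q = 1 > \alpha$.
	\item Taking $q = \alpha$, the right-hand side holds, so $\ell \alpha \leq 1$, i.e.\ $\ell \leq 1/\alpha$.
	\item If $\ell < 1/\alpha$, then any $q \in (\alpha, \min(1, 1/\ell)]$ is nonempty since $\alpha < 1$. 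Such a $q$ satisfies $\ell q \leq 1$ but $q > \alpha$, contradicting the equivalence. Hence $\ell = 1/\alpha$.
\end{enumerate}

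\textbf{Main obstacle.} The only delicate point is attainability of rejection probabilities slightly above $\alpha$. Without randomization, $P$ might be atomic and only a discrete set of $q$ would be achievable, leaving $\ell$ undetermined within an interval. The paper's standing assumption of access to an independent uniform $U$ is exactly what closes this gap, so I would invoke it explicitly at the step where $\phi(\alpha) = d_\alpha \ind{U \leq q}$ is constructed. I would also note that the normalization $(\overline{L}, 1)$ is legitimate because $C > L(0)$, so the conclusion is independent of the chosen representative of the pair $(L, C)$.
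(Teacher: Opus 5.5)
Your proposal is correct and follows essentially the same route as the paper: you reduce nesting to $q\,\overline{L}(d_\alpha) \leq 1 \iff q \leq \alpha$ over all $q \in [0,1]$ (attainable via external randomization), take $q = \alpha$ to get $\overline{L}(d_\alpha) \leq 1/\alpha$, and use rejection probabilities just above $\alpha$ for the reverse inequality. The only difference is cosmetic: you derive a contradiction from an explicit $q \in (\alpha, \min(1, 1/\ell)]$, while the paper lets $p \downarrow \alpha$; you also spell out the sufficiency direction and the $\ell = 0$ case, which the paper leaves implicit.
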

		
		\begin{remark}[Relationship to \citet{grunwald2024beyond}]
			The expected loss framework presented here is a notationally simplified version of the framework of \citet{grunwald2024beyond}.
			Within this framework, \citet{grunwald2024beyond} suggests choosing $\overline{L}(d_\alpha) = 1 / \alpha$.
			Theorem \ref{thm:nested} formalizes this choice, though the underlying reasoning is not highly involved.
		\end{remark}
		
		\begin{remark}[Link expected loss and distortion ratio]\label{rmk:link_loss_size_distortion}
			The choice $\overline{L}(d_\alpha) = 1/\alpha$ yields control of the expected distortion ratio \eqref{ineq:expected_distortion_ratio}.
			Indeed, we then have
			\begin{align}\label{eq:conditional_link_loss_distortion}
				\Ex^P[\overline{L}(\phi(\widetilde{\alpha})) \mid \widetilde{\alpha}]
					= \frac{P(\phi(\widetilde{\alpha}) = d_{\widetilde{\alpha}} \mid \widetilde{\alpha})}{\widetilde{\alpha}},
			\end{align}
			since $\overline{L}(0) = 0$.
			Hence, by the law of iterated expectations, we have
			\begin{align*}
				\Ex^P[\overline{L}(\phi(\widetilde{\alpha}))]
					= \Ex^P\left[\frac{P(\phi(\widetilde{\alpha}) = d_{\widetilde{\alpha}} \mid \widetilde{\alpha})}{\widetilde{\alpha}}\right].
			\end{align*}
		\end{remark}
		
		\begin{remark}[Axiom \ref{axiom:preserving}]
			As mentioned in 	Remark \ref{rmk:link_preservation_nesting}, applying Axiom 2 with $\widetilde{\alpha} \equiv \alpha$ yields one direction of Axiom \ref{axiom:nesting}.
			As a consequence, one may wonder whether Axiom \ref{axiom:preserving}, possibly combined with Axiom \ref{axiom:monotonicity}, may be sufficient to obtain Theorem \ref{thm:nested}.
			This would mean that we could drop Axiom \ref{axiom:nesting} altogether.
			
			The answer is false: Axiom \ref{axiom:preserving} only yields $\overline{L}(d_\alpha) \geq 1/\alpha$, and monotonicity is automatically true here because $L$ is increasing.
			For this reason, and the fact that it results in clean presentations of Theorem \ref{thm:nested} and the upcoming Theorem \ref{thm:CE}, we choose to present Axiom \ref{axiom:nesting} as a separate axiom.
		\end{remark}

	\section{Beyond expected loss}
		Unfortunately, the expected loss framework does not cover the natural candidate notion of validity
		\begin{align}\label{ineq:dd_validity_strong}
			P(\phi(\widetilde{\alpha}) = d_{\widetilde{\alpha}} \mid \widetilde{\alpha}) \leq \widetilde{\alpha},
		\end{align}
		for (almost) every realization of $\widetilde{\alpha}$, as featured in \eqref{ineq:distortion} in the introduction.
		This means that Theorem \ref{thm:nested} does not cover the notion of validity for data-dependent levels that forms the classical counterargument against using data-dependent levels.
		
		To bridge the gap between \eqref{ineq:dd_validity_strong} and the expected loss framework from Section \ref{sec:expected_loss}, we consider a more general conditioning-based framework that nests both.
		We introduce this general framework by using \eqref{eq:conditional_link_loss_distortion} to write \eqref{ineq:dd_validity_strong} as
		\begin{align}\label{ineq:dd_validity_strong_essup}
			\esssup_P \left[\frac{P(\phi(\widetilde{\alpha}) = d_{\widetilde{\alpha}} \mid \widetilde{\alpha} )}{\widetilde{\alpha}}\right]
				\equiv \esssup_P \left[\Ex^P\left[L(\phi(\widetilde{\alpha}))\ \middle|\ \widetilde{\alpha} \right]\right]
				\leq 1,
		\end{align}
		for $L(d_\alpha) = 1/\alpha$ and $L(0) = 0$, where `$\esssup_P Z$' denotes the upper bound of the support of the random variable $Z$ under $P$.
		This rewriting allows for a clean side-by-side comparison to an expected loss bound:
		\begin{align}\label{ineq:dd_validity_expectation}
			\Ex^P\left[\Ex^P\left[L(\phi(\widetilde{\alpha}))\ \middle|\ \widetilde{\alpha} \right]\right]
				\leq 1,
		\end{align}
		where the outer expectation `$\Ex^P$' is replaced by a different aggregator `$\esssup_P$' over the conditional expectation.
		
		The side-by-side comparison between \eqref{ineq:dd_validity_strong_essup} and \eqref{ineq:dd_validity_expectation} shows that the question of which notion of validity one should use comes down to the choice of aggregator.
		To capture the problem of choosing the aggregator, we consider the class of all \emph{certainty equivalent} aggregators $\rho$, which map $[L(0), \infty]$-valued random variables to $[L(0), \infty]$ in a way that fixes constant random variables:
		\begin{align*}
			X \equiv c
				\implies \rho(X) = c.
		\end{align*}
		Certainty equivalence is a very mild condition, and effectively states that $\rho(Z)$ can be viewed as a proper `aggregation' of the random variable $Z$.
		Both $\Ex^P$ and $\esssup_P$ are certainty equivalents, as applying either to a constant random variable simply yields the constant value.
		
		Using a certainty equivalent leads to our general notion of validity for data-dependent levels.
		
		\begin{definition}[General validity]
			For a certainty equivalent $\rho$ and increasing loss function $L : \D \to \mathbb{R}$ with $C > L(0)$, we say			
			\begin{align}\label{ineq:validity_general}
				\phi \textnormal{ is valid for } \widetilde{\alpha}
				\iff \rho(\Ex^P[L(\phi(\widetilde{\alpha})) \mid \widetilde{\alpha}]) \leq C.
			\end{align}
		\end{definition}
		
		For this more general notion of validity, Theorem \ref{thm:CE} shows that Axiom \ref{axiom:nesting} (nesting classical validity) is sufficient to force $\overline{L}(d_\alpha) = 1/\alpha$ on $\alpha \in (0, 1)$ for any certainty equivalent $\rho$.
		
		At the same time, the if-and-only-if nature of the result shows that Axiom \ref{axiom:nesting} places no restriction on the certainty equivalent $\rho$, as illustrated in Example \ref{exm:essup_nests}.
		As a consequence, Axiom \ref{axiom:nesting} does not rule out \eqref{ineq:dd_validity_strong}.
		
		\begin{theorem}\label{thm:CE}
			Consider a notion of validity of the form in \eqref{ineq:validity_general}.
			Then, classical validity is nested if and only if
			\begin{align*}
				\overline{L}(d_\alpha) := \frac{L(d_\alpha) - L(0)}{C - L(0)} = \frac{1}{\alpha}.
			\end{align*}
		\end{theorem}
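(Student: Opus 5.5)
The plan is to reduce the general criterion \eqref{ineq:validity_general} to a scalar inequality when $\widetilde{\alpha} \equiv \alpha$ is constant. In that case $\sigma(\widetilde{\alpha})$ is trivial, so the conditional expectation $\Ex^P[L(\phi(\alpha)) \mid \widetilde{\alpha}]$ is almost surely equal to the constant $\Ex^P[L(\phi(\alpha))]$. Certainty equivalence of $\rho$ then gives $\rho(\Ex^P[L(\phi(\alpha)) \mid \widetilde{\alpha}]) = \Ex^P[L(\phi(\alpha))]$. Validity for $\widetilde{\alpha} \equiv \alpha$ is therefore exactly the expected loss bound $\Ex^P[L(\phi(\alpha))] \leq C$, whatever $\rho$ is. This is the same situation as Theorem \ref{thm:nested}, so the proof effectively reduces to that theorem; I would still write the short argument out directly.

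Since $\phi(\alpha)$ takes values in $\{0, d_\alpha\}$, write $p = P(\phi(\alpha) = d_\alpha)$. Then
\begin{align*}
\Ex^P[L(\phi(\alpha))] = L(0) + p\,(L(d_\alpha) - L(0)).
\end{align*}
Subtracting $L(0)$ and dividing by $C - L(0) > 0$ turns the condition into $p\,\overline{L}(d_\alpha) \leq 1$. Nesting means that for every $\alpha$ and every family $\phi$ we have $p\,\overline{L}(d_\alpha) \leq 1 \iff p \leq \alpha$.

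\emph{Sufficiency.} If $\overline{L}(d_\alpha) = 1/\alpha$, the equivalence holds immediately.

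\emph{Necessity.} This is the step needing some care, because it requires that enough tests exist. Using the external randomization $U$, every $p \in [0,1]$ is attained, for example by $\phi(\alpha) = d_\alpha \ind{U \leq p}$.
\begin{enumerate}
\item Take $p = \alpha$. Classical validity holds, so nesting forces $\alpha\,\overline{L}(d_\alpha) \leq 1$, i.e.\ $\overline{L}(d_\alpha) \leq 1/\alpha$.
\item Suppose $\overline{L}(d_\alpha) < 1/\alpha$. Choose $p$ with $\alpha < p \leq \min(1, 1/\overline{L}(d_\alpha))$, interpreting $1/\overline{L}(d_\alpha)$ as $\infty$ when $\overline{L}(d_\alpha) \le 0$. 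This interval is nonempty because $\alpha < 1$.
\item For this $p$ the general criterion gives validity, while classical validity fails since $p > \alpha$. This contradicts nesting.
\end{enumerate}
Hence $\overline{L}(d_\alpha) = 1/\alpha$ for every $\alpha \in (0,1)$.

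The main obstacle I expect is not algebraic. It is making sure that the conditional expectation given a constant $\widetilde{\alpha}$ is a genuinely constant random variable, so that certainty equivalence applies exactly and not merely almost surely; if needed, I would take a version of it that is identically constant. A second point to check is that the randomized tests used above are admissible members of a family $\phi$, which the paper's latent assumption on $U$ provides.
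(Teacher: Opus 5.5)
Your proposal is correct and follows the paper's proof. You reduce to a constant level, use certainty equivalence of $\rho$ to get the scalar condition $p\,\overline{L}(d_\alpha) \leq 1$, and then use externally randomized tests attaining every $p \in [0,1]$ to force $\overline{L}(d_\alpha) = 1/\alpha$. The only cosmetic difference is that you get the lower bound by contradiction with a single $p \in (\alpha, \min(1, 1/\overline{L}(d_\alpha))]$ rather than by letting $p \downarrow \alpha$; note this interval is nonempty because of both $\alpha < 1$ and your hypothesis $\overline{L}(d_\alpha) < 1/\alpha$, not just the former.
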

		
		\begin{example}[$\esssup_P$ nests classical validity]\label{exm:essup_nests}
			The choice $\rho = \esssup_P$ with $L(d_\alpha) = 1/\alpha$ and $L(0) = 0$ nests classical validity since $\widetilde{\alpha} \equiv \alpha$ yields
			\begin{align*}
				\esssup_P \left[\frac{P(\phi(\widetilde{\alpha}) = d_{\widetilde{\alpha}} \mid \widetilde{\alpha} )}{\widetilde{\alpha}}\right]
					= \frac{P(\phi(\alpha) = d_{\alpha})}{\alpha}.
			\end{align*}
		\end{example}

	\subsection{Main result: pinning a notion of validity}
		As nesting classical validity does not restrict the certainty equivalent $\rho$, we must impose additional conditions to pin a specific choice of $\rho$.
		
		This leads to our main result: Theorem \ref{thm:pin_CE}, which shows that adding Axiom \ref{axiom:preserving} (preserving classical validity) and Axiom \ref{axiom:monotonicity} (monotonicity) is sufficient to pin the notion of validity
		\begin{align*}
			\Ex^P[\overline{L}(\phi(\widetilde{\alpha}))] \leq 1,
		\end{align*}
		with $\overline{L}(d_\alpha) = 1/\alpha$ and $\overline{L}(0) = 0$, under the mild technical condition that $\rho$ is continuous from below: $Y_n \uparrow Y$ implies $\rho(Y_n) \uparrow \rho(Y)$.
		
		\begin{theorem}\label{thm:pin_CE}
			Assume $\rho$ is continuous from below.
			Assume that our notion of validity \eqref{ineq:validity_general} both nests and preserves classical validity, and is monotone.
			Then the normalized loss equals $\overline{L}(d_\alpha) = 1/\alpha$ for every $\alpha \in (0, 1)$, and for every $\phi$ and $\widetilde{\alpha}$ we have
			\begin{align*}
				\phi \textnormal{ is valid for } \widetilde{\alpha}
					\iff \Ex^P[\overline{L}(\phi(\widetilde{\alpha}))] \leq 1.
			\end{align*}
		\end{theorem}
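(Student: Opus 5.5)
We would begin by invoking Theorem \ref{thm:CE}: nesting of classical validity forces $\overline{L}(d_\alpha) = 1/\alpha$ for every $\alpha \in (0,1)$. After normalizing $(L, C)$ to $(\overline{L}, 1)$, which is harmless provided we also transport $\rho$ by the same positive affine map (a certainty equivalent stays a certainty equivalent, and continuity from below is preserved), validity reads $\rho(Z) \leq 1$. Here $Z := \Ex^P[\overline{L}(\phi(\widetilde{\alpha})) \mid \widetilde{\alpha}] = P(\phi(\widetilde{\alpha}) = d_{\widetilde{\alpha}} \mid \widetilde{\alpha})/\widetilde{\alpha}$, as in Remark \ref{rmk:link_loss_size_distortion}. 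It then remains to show that $\rho(Z) \leq 1$ holds if and only if $\Ex^P[Z] \leq 1$, for every $Z$ arising from some pair $(\phi, \widetilde{\alpha})$.

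For the direction ``valid $\Rightarrow \Ex^P[Z] \leq 1$'' we would use preservation together with monotonicity. Suppose $\phi$ is valid for $\widetilde{\alpha}$. Take a grid $\alpha_1 > \alpha_2 > \dots$ and build a weakened pair $(\phi', \widetilde{\alpha}')$ by rounding each realized rejection level $\widetilde{\alpha}$ up to the next grid point (non-rejections stay at $0$). Then $\phi'(\widetilde{\alpha}') \leq \phi(\widetilde{\alpha})$, so the weakened pair is valid by Axiom \ref{axiom:monotonicity}, and Axiom \ref{axiom:preserving} gives $P(\phi(\widetilde{\alpha}) \geq d_{\alpha}) \leq \alpha$ for all $\alpha$. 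We then apply the layer-cake identity
\begin{align*}
\Ex^P\big[\overline{L}(\phi(\widetilde{\alpha}))\big] = \int_0^\infty P\big(\overline{L}(\phi(\widetilde{\alpha})) > t\big)\,dt = \int_1^\infty P\big(\phi(\widetilde{\alpha}) > d_{1/t}\big)\,dt + P\big(\phi(\widetilde{\alpha}) \neq 0\big).
\end{align*}
Combined with $P(\phi(\widetilde{\alpha}) \geq d_\alpha) \leq \alpha$, this alone only yields a harmonic (logarithmic) bound, not $\leq 1$. Preservation by itself is therefore too weak, and the argument needs more care.

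The correct move is to use monotonicity more cleverly. From a valid $(\phi, \widetilde{\alpha})$ we construct, using the external randomization $U$, a weaker decision that rejects at a single fixed level. Pick any $\alpha$ and define $\phi'$ to reject at $d_\alpha$ on the event $\{\phi(\widetilde{\alpha}) = d_{\widetilde{\alpha}}\}$ with probability $\min(1, \alpha/\widetilde{\alpha})$, and not to reject otherwise. When $\widetilde{\alpha} \leq \alpha$ we have $d_\alpha \leq d_{\widetilde{\alpha}}$, so this is a weakening; when $\widetilde{\alpha} > \alpha$ a rejection at $d_\alpha$ would be stronger, so it cannot be used there. The cleaner route is therefore to take $\alpha$ at least $\operatorname{ess\,sup}\widetilde{\alpha}$, or to truncate. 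Concretely, we restrict first to $\widetilde{\alpha} \geq \epsilon$. A randomized thinning down to the common level $\alpha^* = \sup \widetilde{\alpha}$ then gives $P(\text{reject at } \alpha^*) = \alpha^* \Ex[Z]$. Monotonicity makes this valid, preservation gives $\alpha^* \Ex[Z] \leq \alpha^*$, and hence $\Ex[Z] \leq 1$. General $\widetilde{\alpha}$ follows by letting $\epsilon \downarrow 0$ with monotone convergence.

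For the converse, ``$\Ex^P[Z] \leq 1 \Rightarrow$ valid'', we expect the main obstacle, since it requires showing that $\rho$ does not reject such pairs. Our plan is to show that every pair with $\Ex[Z] \leq 1$ lies below, in the decision order, some pair that is known to be valid, which then transfers validity through monotonicity. The natural candidate dominating pair is a rejection at a constant level, which is valid by nesting. We would first handle discrete $\widetilde{\alpha}$ with finitely many values. We would build a single fixed-level-like object whose decision dominates $\phi(\widetilde{\alpha})$ pointwise: using $U$, we couple the rejection events into a pair $(\phi^*, \widetilde{\alpha}^*)$ whose $\rho$-value equals that of a constant by certainty equivalence, namely one with $Z^* \equiv 1$ almost surely, which gives $\rho(Z^*) = 1$. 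The key step is to exhibit, for every $Z$ with $\Ex Z \leq 1$, a dominating pair with constant conditional ratio equal to $1$. Continuity from below of $\rho$ then extends the result from finitely-valued $\widetilde{\alpha}$ and bounded $Z$ to the general case via $Y_n \uparrow Y$.
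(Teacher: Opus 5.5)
Your forward direction (valid $\Rightarrow \Ex^P[Z]\le 1$) has a genuine gap, in the thinning step. Retaining each rejection with probability $\epsilon/\widetilde{\alpha}$ and reporting it at the common level $\epsilon\le\widetilde{\alpha}$ does give rejection probability $\epsilon\,\Ex^P[Z]$. But it replaces $d_{\widetilde{\alpha}}$ by the \emph{stronger} decision $d_\epsilon$, so Axiom \ref{axiom:monotonicity} says nothing about its validity. Rounding up to $\alpha^*=\sup\widetilde{\alpha}$ is a genuine weakening. However, your factor $\min(1,\alpha^*/\widetilde{\alpha})$ then equals $1$, and the rejection probability is $P(\phi(\widetilde{\alpha})\neq 0)=\Ex^P[\widetilde{\alpha}Z]\le\alpha^*\Ex^P[Z]$, not $\alpha^*\Ex^P[Z]$. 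Preservation therefore returns only the vacuous bound $\Ex^P[\widetilde{\alpha}Z]\le\alpha^*$. This cannot be patched within your strategy, because a weakening can only shrink the events $\{\cdot\ge d_a\}$, and some pairs pass preservation at every level yet have $\Ex^P[Z]>1$. For example, take $\phi(\alpha)\equiv d_\alpha$ with $\widetilde{\alpha}$ uniform on $[1/2,1)$ under $P$. Then $P(\phi(\widetilde{\alpha})\ge d_a)=\max(0,2a-1)\le a$ for every $a$, while $Z=1/\widetilde{\alpha}$ has $\Ex^P[Z]=2\ln 2>1$. So preservation and monotonicity, applied only to the given pair and the pairs below it, can never certify invalidity.

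The missing idea is that $\rho$ sees a pair only through the random variable $Y=\Ex^P[L(\phi(\widetilde{\alpha}))\mid\widetilde{\alpha}]$, so two pairs producing the same $Y$ are valid or invalid together. The paper therefore proves $\rho(Y)\le C\iff\Ex^P[Y]\le C$ for \emph{arbitrary} $Y$, by replicating $Y$ with purpose-built pairs (Lemma \ref{lem:replication}). For bounded $Y$ (with $\overline{Y}\le M$) and $\Ex^P[Y]>C$, it takes $\widetilde{\alpha}'=a(1-\delta\overline{Y}/M)\le a$, an injective function of $\overline{Y}$, and rejects on $\{\overline{Y}\ge U/\widetilde{\alpha}'\}$. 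This reproduces exactly the same $Y$ as conditional loss, yet rejects at levels at most $a$ with probability at least $a(1-\delta)\Ex^P[\overline{Y}]>a$. Preservation then forces $\rho(Y)>C$, so your original pair is invalid too. This is your ``thin to a small common level'' idea, made legitimate by encoding $Y$ in a perturbation of the level rather than by appealing to monotonicity. Your restriction to $\{\widetilde{\alpha}\ge\epsilon\}$ plus monotone convergence could then replace the paper's use of continuity from below for unbounded $Z$.

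Your converse is a genuinely different and viable route. The paper replicates $Y$ below a classically valid fixed-level test, using nesting plus monotonicity, and needs continuity from below for $\Ex^P[Y]=C$ and for unbounded $Y$. You instead dominate the given pair by one whose conditional ratio is identically $1$, which is valid by certainty equivalence alone. You only assert that construction, though. It can be done directly, without discretizing:
keep $\widetilde{\alpha}^*=\widetilde{\alpha}$ on the rejection event $R$;
use $U$ to assign levels on $R^c$ so that level $a$ receives mass $\frac{1-a}{a}P(R,\widetilde{\alpha}\in da)$, which makes $P(R\mid\widetilde{\alpha}^*)=\widetilde{\alpha}^*$ and uses total mass $\Ex^P[Z]-P(R)\le P(R^c)$;
put the leftover mass $1-\Ex^P[Z]$ at some level $b$, with added rejections on a fraction $b$ of it.
This covers $\Ex^P[Z]=1$ and unbounded $Z$ at once. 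Your proposed extension ``via $Y_n\uparrow Y$'', by contrast, would itself need a replication step to realize the approximants as pairs.
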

		
		\begin{remark}[Proof strategy]
			In the proof, preservation contributes the forward direction $\implies$ and monotonicity the backwards direction $\impliedby$.
			The overall proof strategy relies on constructing adversarial data-dependent levels that lead to a violation of these axioms.
			Example \ref{exm:violating_monotonicity} illustrates such a construction to show that $\rho = \esssup_P$ violates monotonicity.
			
			This example is adapted from \citet{koning2023post}, who uses it to argue that conditional validity ($\esssup_P$) leads to `strange' properties as a heuristic motivation for the expected distortion ratio to conditional validity.
		\end{remark}

		\begin{example}[$\rho = \esssup_P$ violates monotonicity]\label{exm:violating_monotonicity}
			Let $p \sim \textnormal{Unif}[0,1]$ and let $\phi(\alpha)$ reject at level $\alpha$ whenever $p\leq \alpha$. 
			The fixed data-dependent level $\widetilde{\alpha}_0 \equiv 0.05$ is valid under the $\esssup_P$ criterion, since
			\begin{align*}
				\esssup_P\left[P(p \leq \widetilde{\alpha}_0 \mid \widetilde{\alpha}_0) / \widetilde{\alpha}_0\right]
					= P(p \leq 0.05) / 0.05
					= 1.
			\end{align*}
		
			Now consider the more conservative data-dependent level
			\begin{align*}
				\widetilde{\alpha}_1
				=
				\begin{cases}
					0.10, & p \leq 0.05, \\
					0.05, & p > 0.05.
				\end{cases}
			\end{align*}
			Then $\widetilde{\alpha}_1 \geq \widetilde{\alpha}_0$, so
			$\phi(\widetilde{\alpha}_1) \leq \phi(\widetilde{\alpha}_0)$ pointwise.
			However, conditional on $\widetilde{\alpha}_1 = 0.10$, rejection occurs with
			probability one. Hence
			\begin{align*}
				\esssup_P \left[\frac{P(\phi(\widetilde{\alpha}_1) = d_{\widetilde{\alpha}_1} \mid \widetilde{\alpha}_1)}{\widetilde{\alpha}_1 }\right]
					\geq \frac{1}{0.10} > 1.
			\end{align*}
			As a result, $\rho = \esssup_P$ violates monotonicity.
		\end{example}

		\begin{remark}[Post-hoc validity]
			Our results characterize validity for a single data-dependent level $\widetilde{\alpha}$.
			The main focus of the literature has been \emph{post-hoc} validity: validity for every data-dependent level.
			Theorem \ref{thm:pin_CE} extends to the post-hoc setting, and simply forces $\Ex^P[\overline{L}(\phi(\widetilde{\alpha}))] \leq 1$ for every $\widetilde{\alpha}$, with $\overline{L}(d_\alpha) = 1/\alpha$ and $\overline{L}(0) = 0$.
		\end{remark}
	
	\section{E-values as a generalization of a test}\label{sec:E-value}
		An E-value is commonly defined as a $[0, \infty]$-valued random variable $e$ that satisfies
		\begin{align}\label{dfn:e-value_old}
			\Ex^P[e] \leq 1.
		\end{align} 
		We believe this definition does not do justice to the true nature of the E-value.
		In this section, we use Theorem \ref{thm:pin_CE} to support a recent proposal to view the E-value as a multi-decision generalization of a hypothesis test \citep{koning2024continuous}.
		
		The idea behind this proposal is to generalize a level $\alpha$ hypothesis test $\phi(\alpha) : \X \to \{0, d_\alpha\}$ by adding more decisions into the codomain: $\{0, d_{\alpha_1}, d_{\alpha_2}, \dots\}$.
		This leads to how we believe that the E-value should be fundamentally defined: as a map $\e : \X \to \D$, which produces a decision to reject at a data-dependent significance level.
		We notationally distinguish this from \eqref{dfn:e-value_old} by using the letter `$\e$' instead of `$e$'.
		
		\begin{definition}[E-value: fundamental]\label{dfn:E-value_fundamental}
			We say that $\e : \X \to \D$ is an E-value.
		\end{definition}
		
		A consequence of this fundamental definition is that a test at a data-dependent level $\phi(\widetilde{\alpha})$ \emph{is an E-value}, since this may result in a rejection at various levels.
		This suggests we can apply Theorem \ref{thm:pin_CE} to determine an appropriate notion of validity for this abstract definition of the E-value.
		
		As Theorem \ref{thm:pin_CE} applies to the combination of a family of tests and a data-dependent level, we require some preparation to apply it.
		In particular, we induce a family of tests from an E-value by thresholding it:
		\begin{align*}
			\phi_\e(\alpha)
				&= d_\alpha \mathbb{I}\{\e \geq d_\alpha\}.
		\end{align*}
		Moreover, we define the data-dependent level $\widetilde{\alpha}_\e$ that matches the level at which the E-value rejects.
		In particular, if $\e = d_a$ then we set $\widetilde{\alpha}_\e = a$, and if $\e = 0$ then we set $\widetilde{\alpha}_\e$ equal to some arbitrary value.
		Combined, this means that $\e$ induces a pair $(\phi_\e, \widetilde{\alpha}_\e)$ that precisely recovers $\e$ itself:
		\begin{align*}
			\e = \phi_\e(\widetilde{\alpha}_\e).
		\end{align*}
		
		By defining validity of the E-value $\e$ as validity of $\phi_\e$ for $\widetilde{\alpha}_\e$, Corollary \ref{cor:E-value} provides a notion of validity for the E-value.
		\begin{corollary}\label{cor:E-value}
			Under the conditions of Theorem \ref{thm:pin_CE}, $\e$ is valid if and only if $\Ex^P[\overline{L}(\e)] \leq 1$, with $\overline{L}(d_\alpha) = 1/\alpha$ and $\overline{L}(0) = 0$.
		\end{corollary}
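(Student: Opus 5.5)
The corollary follows by applying Theorem \ref{thm:pin_CE} to the induced pair $(\phi_\e, \widetilde{\alpha}_\e)$, since validity of $\e$ is defined as validity of $\phi_\e$ for $\widetilde{\alpha}_\e$. First, $\phi_\e$ is a legitimate family of tests in the sense of Section \ref{sec:data-dependent_levels}. For each $\alpha \in (0,1)$, $\phi_\e(\alpha) = d_\alpha \mathbb{I}\{\e \geq d_\alpha\}$ is a $\{0, d_\alpha\}$-valued random variable. Likewise, $\widetilde{\alpha}_\e : \X \to (0,1)$ is a data-dependent level. On the event $\e = 0$ we fix an arbitrary constant, say $1/2$, which keeps it measurable whenever $\e$ is.

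Second, we verify the identity $\e = \phi_\e(\widetilde{\alpha}_\e)$ pointwise.
\begin{itemize}
\item If $\e(x) = d_a$, then $\widetilde{\alpha}_\e(x) = a$ and $\phi_\e(a)(x) = d_a \mathbb{I}\{d_a \geq d_a\} = d_a$.
\item If $\e(x) = 0$, then $\phi_\e(\widetilde{\alpha}_\e(x))(x) = d_{\widetilde{\alpha}_\e(x)} \mathbb{I}\{0 \geq d_{\widetilde{\alpha}_\e(x)}\}$. This equals $0$ provided $d_\alpha > 0$ for each $\alpha$, that is, provided rejections are distinct from the least element $0$. This is implicit in the setup, since a test that never rejects is otherwise indistinguishable from rejecting. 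I would state this assumption explicitly, because it is the one place where the bookkeeping could fail.
\end{itemize}
Implicitly we also assume that $\e$ takes values in $\{0\} \cup \{d_\alpha : \alpha \in (0,1)\}$, so that $\widetilde{\alpha}_\e$ is well defined. If $\alpha \mapsto d_\alpha$ is not injective, any choice of $a$ with $\e = d_a$ works, because $\overline{L}(d_a)=1/a$ is then forced to agree across such choices by Theorem \ref{thm:nested}/\ref{thm:CE}.

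Third, Theorem \ref{thm:pin_CE} holds for every $\phi$ and $\widetilde{\alpha}$ under its assumptions on $\rho$ and the axioms. It therefore gives that $\phi_\e$ is valid for $\widetilde{\alpha}_\e$ if and only if $\Ex^P[\overline{L}(\phi_\e(\widetilde{\alpha}_\e))] \le 1$, with $\overline{L}(d_\alpha) = 1/\alpha$ and $\overline{L}(0)=0$. Substituting the identity from the second step yields $\Ex^P[\overline{L}(\e)] \le 1$.

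The main obstacle is the non-rejection case in the second step, together with the well-definedness of $\widetilde{\alpha}_\e$. Everything else is a direct substitution.
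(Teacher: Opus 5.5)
Your proposal is correct and is essentially the paper's own argument: apply Theorem \ref{thm:pin_CE} to the induced pair $(\phi_\e, \widetilde{\alpha}_\e)$ and substitute the identity $\e = \phi_\e(\widetilde{\alpha}_\e)$. Your extra caveats are harmless but unnecessary, because nesting already forces $\overline{L}(d_\alpha) = 1/\alpha \neq 0 = \overline{L}(0)$, which makes $d_\alpha > 0$ and $\alpha \mapsto d_\alpha$ injective (and even if some $d_b$ were $0$, the non-rejection case would still return $d_b = 0$).
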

		
		This result can be interpreted as a motivation for the common definition \eqref{dfn:e-value_old} of the E-value, by identifying $\e$ with the numerical loss value $\overline{L}(\e)$,
		\begin{align*}
			e := \overline{L}(\e),
		\end{align*}
		so that we recover \eqref{dfn:e-value_old}.
		
		\begin{remark}[The domain of $\alpha$ and $e$]\label{rmk:alpha_domain}
			The representation $\widetilde{\alpha}_\e$ can only recover every (fundamental) E-value $\e : \X \to \D$ through $\e = \phi_\e(\widetilde{\alpha}_\e)$ if $\D = \{0\} \cup \{d_\alpha : \alpha \in (0, 1)\}$, so that the resulting numerical representation only recovers $\{0\} \cup (1, \infty)$-valued E-values $e : \X \to \{0\} \cup (1, \infty)$, where the $\{0\}$ comes from $\overline{L}(0) = 0$.
			
			We can recover the classical codomain $[0, \infty]$ of the E-value by associating a non-rejection to $\alpha = \infty$, enlarging $\D$ with the decisions $d_\alpha$ to reject at levels $\alpha \in \{0\} \cup [1, \infty)$, and subsequently extending the loss function $\overline{L}(d_\alpha) = 1 / \alpha$ to $\alpha \in [0, \infty]$ with the conventions $1/0 = \infty$ and $1/\infty = 0$.
			The fact that we need to extend to such $\alpha$ means that the evidence space $\D$ of an E-value is a formal extension of that of classical testing, requiring us to add decisions to `reject at level $\alpha$' for $\alpha > 1$ that have no interpretation in classical testing.
		\end{remark}
			
		\subsection{Abstract post-hoc validity}\label{sec:post-hoc_abstract}
			The main focus of the literature on testing with data-dependent levels is the link between E-values and \emph{post-hoc} validity.
			Recall that a family of tests $\phi$ is post-hoc valid if it is valid \emph{for every} data-dependent level $\widetilde{\alpha}$:
			\begin{align*}
				\phi \textnormal{ is post-hoc valid } \iff \phi \textnormal{ is valid for every } \widetilde{\alpha}.
			\end{align*}
			
			The purpose of this section is to show that the link between post-hoc validity and the E-value established in \citet{grunwald2024beyond} and \citet{koning2023post} is not a mere consequence of the common superficial definition \eqref{dfn:e-value_old} of the E-value.
			Indeed, Theorem \ref{thm:post-hoc_valid} below shows that it goes through even under the fundamental Definition \ref{dfn:E-value_fundamental}, under mild assumptions on the notion of validity.
			
			For this result, we do not assume a concrete notion of validity, but only the following two properties that it should satisfy:
			\begin{itemize}
				\item E-value monotonicity: if $\e$ is valid and $\e' \leq \e$ pointwise, then $\e'$ is valid.
				\item Continuity from below: if $(\e_n)_{n \geq 1}$ is an increasing sequence of valid $\D$-valued decisions and $\e = \sup_{n \geq 1} \e_n$ is a $\D$-valued random variable, then $\e$ is valid.
			\end{itemize}
			
			To present the result, we define the E-value of a family of tests $\phi$ as 
			\begin{align*}
				\e_\phi := \sup_{\alpha\in(0,1)} \phi(\alpha),
			\end{align*}
			which we assume exists as a $\D$-valued random variable.
			Moreover, we assume that $\e_\phi$ can be approximated post-hoc, in the sense that there exists an increasing sequence of data-dependent levels $(\widetilde{\alpha}_n)_{n \geq 1}$ such that
			\begin{align*}
				\phi(\widetilde{\alpha}_n) \uparrow \e_\phi.
			\end{align*}
			Using this E-value, we define the closure $\overline{\phi}$ of a family of tests $\phi$ as the threshold family generated by $\e_\phi$:
			\begin{align*}
				\overline{\phi}(\alpha) 
					= d_\alpha \ind{\e_\phi\geq d_\alpha}
			\end{align*}
			By construction, $\phi$ is dominated by its closure: $\phi(\alpha) \leq \overline{\phi}(\alpha)$.
			
			Using only these abstract assumptions, and without assuming any concrete notion of validity, Theorem \ref{thm:post-hoc_valid} shows the sufficiency and necessity of E-values for post-hoc validity.
			
			\begin{theorem}[Post-hoc validity and E-values]\label{thm:post-hoc_valid}
				If $\phi$ is post-hoc valid, then its closure $\overline{\phi}$ is post-hoc valid.
				Moreover, $\overline{\phi}$ is post-hoc valid if and only if its associated E-value $\e_{\overline{\phi}}$ is valid.	
			\end{theorem}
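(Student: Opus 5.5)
The abstract framework identifies validity of $\phi$ for $\widetilde{\alpha}$ with validity of the decision (E-value) $\phi(\widetilde{\alpha})$. The two assumed properties, E-value monotonicity and continuity from below, are therefore stated on $\D$-valued decisions. The key objects are the E-value $\e_\phi = \sup_\alpha \phi(\alpha)$ and the approximating sequence $\phi(\widetilde{\alpha}_n) \uparrow \e_\phi$. I will prove three implications in order: (i) $\phi$ post-hoc valid implies $\e_\phi$ valid; (ii) $\e_\phi$ valid implies $\overline{\phi}$ post-hoc valid; (iii) $\overline{\phi}$ post-hoc valid if and only if $\e_{\overline{\phi}}$ valid. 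The first claim of the theorem is then (i) followed by (ii).

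For (i), I use that post-hoc validity gives validity of $\phi(\widetilde{\alpha}_n)$ for every $n$. The sequence $\phi(\widetilde{\alpha}_n)$ is increasing with supremum $\e_\phi$, which is assumed to be a $\D$-valued random variable. Continuity from below then yields validity of $\e_\phi$.

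For (ii), fix an arbitrary data-dependent level $\widetilde{\alpha}$. Pointwise, $\overline{\phi}(\widetilde{\alpha}) = d_{\widetilde{\alpha}}\ind{\e_\phi \geq d_{\widetilde{\alpha}}}$. If the indicator is one, this equals $d_{\widetilde{\alpha}} \leq \e_\phi$. Otherwise it equals $0 \leq \e_\phi$, since $0$ is the least element. Hence $\overline{\phi}(\widetilde{\alpha}) \leq \e_\phi$ pointwise, and E-value monotonicity gives validity of $\overline{\phi}(\widetilde{\alpha})$. Since $\widetilde{\alpha}$ was arbitrary, $\overline{\phi}$ is post-hoc valid.

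For (iii), I first compute $\e_{\overline{\phi}} = \sup_\alpha d_\alpha \ind{\e_\phi \geq d_\alpha}$. Each term is at most $\e_\phi$, so $\e_{\overline{\phi}} \leq \e_\phi$. The main obstacle is the reverse inequality. If $\e_\phi = d_a$, the term at $\alpha = a$ equals $d_a$. If $\e_\phi$ is a supremum not of the form $d_a$, every $d_\alpha < \e_\phi$ is attained by some term, so the supremum is still $\e_\phi$. My first attempt will be to argue $\e_{\overline{\phi}} = \e_\phi$ this way from total order of $\D$. If that fails in degenerate cases, I will fall back to showing directly that $\overline{\phi}$ inherits the approximation property.

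Given $\e_{\overline{\phi}} = \e_\phi$, the backward direction of (iii) is (ii) with $\e_\phi$ replaced by $\e_{\overline{\phi}}$. For the forward direction, I apply (i) to $\overline{\phi}$. This needs an increasing sequence $\widetilde{\alpha}_n$ with $\overline{\phi}(\widetilde{\alpha}_n) \uparrow \e_{\overline{\phi}}$. Since $\phi \leq \overline{\phi}$ and $\overline{\phi}(\widetilde{\alpha}_n) \leq \e_{\overline{\phi}} = \e_\phi$, the same sequence used for $\phi$ works, provided $\overline{\phi}(\widetilde{\alpha}_n)$ is increasing in $n$. If it is not, I replace $\overline{\phi}(\widetilde{\alpha}_n)$ by the running maximum $\max_{m \leq n} \phi(\widetilde{\alpha}_m)$, which equals $\phi(\widetilde{\alpha}_n)$ anyway. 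Alternatively, I will apply continuity from below directly to the increasing valid decisions $\phi(\widetilde{\alpha}_n) \leq \overline{\phi}(\widetilde{\alpha}_n)$, which are valid by monotonicity and post-hoc validity of $\overline{\phi}$, and whose supremum is $\e_\phi = \e_{\overline{\phi}}$.
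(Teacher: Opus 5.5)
Your proposal is correct and follows the paper's proof essentially step for step. It obtains validity of $\e_\phi$ from the approximating sequence and continuity from below, and post-hoc validity of $\overline{\phi}$ from $\overline{\phi}(\widetilde{\alpha}) \leq \e_\phi$ and monotonicity; for the forward direction of the equivalence, your ``alternatively'' route (continuity from below applied to the valid decisions $\phi(\widetilde{\alpha}_n) \leq \overline{\phi}(\widetilde{\alpha}_n)$) is exactly what the paper does. The only difference is that the paper gets $\e_{\overline{\phi}} = \e_\phi$ from the direct squeeze $\phi(\alpha) \leq \overline{\phi}(\alpha) \leq \e_\phi$ for every $\alpha$, which is cleaner than your case analysis on the form of $\e_\phi$ and makes your fallback plans unnecessary.
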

		
	\section{Distinguishing non-rejections}\label{sec:distinguishing_non-rejections}
		A fundamental assumption in our framework is that all non-rejections are treated alike across different levels: they are all assigned to the bottom element ``$0$'' of our evidence space $\D$.
		An alternative approach is to distinguish non-rejections at different levels.
		In this section, we show that this idea is fruitless within the expected loss framework: the combination of Axiom \ref{axiom:nesting} and Axiom \ref{axiom:preserving} forces all non-rejections to have the same loss.
				
		To distinguish non-rejections at different levels, we replace the binary decision space $\{0, d_\alpha\}$ of a level $\alpha$ test by $\{n_\alpha, d_\alpha\}$, where $n_\alpha \in \D$ denotes the decision not to reject at level $\alpha$.
		Here, we assume every non-rejection decision is weaker than every rejection decision: $n_\beta < d_\alpha$, for every $\alpha, \beta \in (0, 1)$.
		In this enriched framework, a family of tests is a collection of maps
		\begin{align*}
			\psi(\alpha) : \X \to \{n_\alpha, d_\alpha\}, \quad \alpha \in (0, 1).
		\end{align*}
		For a given data-dependent level $\widetilde{\alpha}$, the resulting decision is $\psi(\widetilde{\alpha}(x))(x) \in \{n_{\widetilde{\alpha}}, d_{\widetilde{\alpha}}\}$.
		
		When we treat non-rejections as distinct decisions, a notion of validity may also depend on the non-rejection decision $n_\alpha$.
		An example of such a validity criterion is
		\begin{align*}
			P(\psi(\widetilde{\alpha}) = d_{\widetilde{\alpha}}) \leq \Ex^P[\widetilde{\alpha}],
		\end{align*}
		as featured in \eqref{ineq:arithmetic_mean_level} and used by \citet{gauthier2025values}, \citet{gauthier2026backward} and \citet{zhu2026beyond}.
		Within an expected loss representation, this corresponds to $L(d_\alpha) = 1 - \alpha$, $L(n_\alpha) = -\alpha$ and $C = 0$, since
		\begin{align*}
			\Ex^P[L(\psi(\widetilde{\alpha}))]
				= P(\psi(\widetilde{\alpha}) = d_{\widetilde{\alpha}}) - \Ex^P[\widetilde{\alpha}].
		\end{align*}
		
		While this generalized framework may sound promising on the surface, our axioms collapse it back into the framework where all non-rejections are treated alike.
		Indeed, Proposition \ref{prp:distinguishing_non-rejections_expected_loss} shows that this already happens within the expected loss framework, where the losses of different non-rejection decisions are forced to be the same.
		
		A consequence of this result is that \eqref{ineq:arithmetic_mean_level} violates the combination of nesting and preservation.
		
		\begin{proposition}\label{prp:distinguishing_non-rejections_expected_loss}
			Consider an expected loss notion of validity of the form
			\begin{align*}
				\psi \textnormal{ is valid for } \widetilde{\alpha}
				\quad\Longleftrightarrow\quad
				\Ex^P[L(\psi(\widetilde{\alpha}))] \leq C,
			\end{align*}
			where $\psi(\alpha) : \X \to \{n_\alpha, d_\alpha\}$.
			Assume this notion of validity nests classical validity and preserves classical validity.
			Then all non-rejections have the same loss:
			\begin{align*}
				L(n_\alpha) = L(n_\beta), \quad \textnormal{for every } \alpha, \beta \in (0, 1).
			\end{align*}
		\end{proposition}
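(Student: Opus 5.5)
The plan is to first use Axiom \ref{axiom:nesting} to pin down how $C$ relates to $L(n_\alpha)$ and $L(d_\alpha)$ for each fixed $\alpha$. Then Axiom \ref{axiom:preserving} is applied to a two-point data-dependent level, which rules out $L(n_a) < L(n_b)$ for any pair $a, b$. Swapping the roles of $a$ and $b$ then gives equality.

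\emph{Step 1 (nesting pins the threshold).} Fix $\alpha \in (0,1)$ and write $\Delta_\alpha := L(d_\alpha) - L(n_\alpha)$. Since $L$ is increasing and $n_\alpha < d_\alpha$, we have $\Delta_\alpha > 0$. For the constant level $\widetilde{\alpha} \equiv \alpha$ with rejection probability $q := P(\psi(\alpha) = d_\alpha)$, the expected loss is
\begin{align*}
\Ex^P[L(\psi(\alpha))] = L(n_\alpha) + q\,\Delta_\alpha ,
\end{align*}
which is strictly increasing in $q$. Using the external randomization $U$, every $q \in [0,1]$ is attained. Axiom \ref{axiom:nesting} says that this quantity is $\leq C$ exactly when $q \leq \alpha$. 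A strictly increasing affine function of $q$ is $\leq C$ precisely on $[0, \alpha]$ only if it equals $C$ at $q = \alpha$. Hence
\begin{align*}
C = L(n_\alpha) + \alpha\,\Delta_\alpha \qquad \text{for every } \alpha \in (0,1).
\end{align*}

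\emph{Step 2 (preservation gives an inequality between non-rejection losses).} Fix $a, b \in (0,1)$ and suppose, for contradiction, that $L(n_a) < L(n_b)$. Using $U$, build an event $A$ with $P(A) = \pi$ and set $\widetilde{\alpha} = a$ on $A$ and $\widetilde{\alpha} = b$ on $A^c$. Let $\psi(a)$ never reject, and let $\psi(b)$ reject on $A^c$ with conditional probability $r$, chosen (again via $U$) so that
\begin{align*}
(1-\pi)\, r = b + \delta ,
\end{align*}
where $\pi, \delta > 0$ are small enough that $r \leq 1$. Because $\psi(a)$ never rejects, the event $\{\psi(\widetilde{\alpha}) \geq d_b\}$ consists exactly of the rejections at level $b$. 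It therefore has probability $b + \delta > b$, so Axiom \ref{axiom:preserving} says this pair must be invalid.

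On the other hand, substituting $C = L(n_b) + b\,\Delta_b$ from Step 1, the expected loss is
\begin{align*}
\pi L(n_a) + (1-\pi)L(n_b) + (b+\delta)\Delta_b = C + \pi\bigl(L(n_a) - L(n_b)\bigr) + \delta\,\Delta_b .
\end{align*}
Choosing $\delta < \pi\bigl(L(n_b) - L(n_a)\bigr)/\Delta_b$ makes this $\leq C$, so the pair is valid. This contradicts Axiom \ref{axiom:preserving}, and hence $L(n_a) \geq L(n_b)$.

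\emph{Step 3 (conclude).} Step 2 holds for arbitrary $a, b$, so exchanging them gives $L(n_b) \geq L(n_a)$ as well. Therefore $L(n_a) = L(n_b)$ for all $a, b \in (0,1)$.

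The main obstacle I expect is the construction in Step 2. It requires that $\widetilde{\alpha}$ may depend on the auxiliary randomization $U$, which the latent assumptions permit, and that $\pi$ and $\delta$ can be chosen jointly so that $r \leq 1$ and the loss inequality holds. Both follow by first taking $\pi$ small and then taking $\delta$ small relative to $\pi$.
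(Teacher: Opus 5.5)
Your proof is correct and follows the paper's argument. Nesting pins $C=\alpha L(d_\alpha)+(1-\alpha)L(n_\alpha)$. Preservation is then applied to a two-point data-dependent level, where swapping in the cheaper non-rejection frees enough budget to reject at a fixed level with probability above that level, and symmetry gives equality.

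The differences are cosmetic. The paper rejects at level $\alpha$ on all of an event $A$ with $P(A)$ slightly above $\alpha$ and uses the cheap non-rejection on $A^c$, which avoids your small-$\pi$ bookkeeping and the constraint $r\le 1$. Also, $\Delta_\alpha>0$ already follows from nesting itself, so you do not need $L$ to be strictly increasing.
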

		
	\section{Discussion}
		By providing a formal motivation for the expected distortion ratio, we aim to make testing with data-dependent levels more palatable.
		We believe this is important because we expect statisticians to already be wary of using data-dependent levels, which is only amplified if the proposed extension appears arbitrary.
		
		We stress that the message of this paper is not that data-dependent levels should always be used, nor that classical fixed-level testing should be abandoned.
		Rather, we argue that, if one is willing to consider data-dependent levels, then controlling the expected distortion ratio provides a natural way to do so.
		That is, of course, if one is convinced by the axioms.
		
		Regarding the axioms, we are quite pleased with how they turned out: at the start of this project we had anticipated a long list of technical axioms.
		Instead, the axioms are three simple and interpretable statements about the validity of a family of tests $\phi$ for a data-dependent level $\widetilde{\alpha}$, with meaningful practical implications for how such a test can be used.
		We believe this interpretability is crucial, because the goal of an axiomatization is to facilitate forming an opinion about whether the axioms (and so their implications) are reasonable.
		
		We are also happy about how the axioms enter the characterization of the extension in a clear and distinct way.
		Indeed, nesting explains the ratio in the expected distortion ratio, but is independent from the choice of aggregator $\rho$ in the general framework.
		Preservation and monotonicity subsequently each contribute a direction to the final result, pinning the aggregator $\rho$.

\bibliographystyle{plainnat}
\bibliography{bibliography}

\newpage
\appendix
			
	\section{Proof of Theorem \ref{thm:nested}}\label{sec:proof_expected_loss}
		\begin{proof}
			Fix a level $\alpha$ and define the shorthands $p_\alpha(\phi) := P(\phi(\alpha) = d_\alpha)$ and $\ell_\alpha := \overline{L}(d_\alpha)$.
			Since $\phi(\alpha)$ is $\{0, d_\alpha\}$-valued and $\overline{L}(0) = 0$, we have
			\begin{align*}
				\Ex^P[\overline{L}(\phi(\alpha))]
					= p_\alpha(\phi) \ell_\alpha.
			\end{align*}
			In this notation, classical fixed-$\alpha$ validity corresponds to
			\begin{align*}
				p_\alpha(\phi) \leq \alpha,
			\end{align*}
			and expected loss validity corresponds to
			\begin{align*}
				p_\alpha(\phi) \ell_\alpha \leq 1.
			\end{align*}
			
			By the external randomization assumption, for every $p \in [0,1]$ there exists an event $B$ with $P(B) = p$.
			Hence the test
			\begin{align*}
				\phi(\alpha) = d_\alpha \ind{B}
			\end{align*}
			has $p_\alpha(\phi) = p$.
			This means nesting classical validity at level $\alpha$ is equivalent to
			\begin{align*}
				p \leq \alpha 
					\iff p\ell_\alpha \leq 1, \textnormal{ for every } p \in [0,1].
			\end{align*}
			As this needs to hold for every $p$, it forces $\ell_\alpha = 1/\alpha$.
			Indeed, for $p = \alpha$ we have $\alpha\ell_\alpha\leq1$, so that $\ell_\alpha \leq 1/\alpha$.
			On the other hand, for every $p>\alpha$, the equivalence implies $p \ell_\alpha > 1$, and so $\ell_\alpha > 1/p$. 
			Letting $p \downarrow \alpha$ gives $\ell_\alpha \geq 1/\alpha$.
			 Hence $\ell_\alpha=1/\alpha$.
		\end{proof}
			
	\section{Proof of Theorem \ref{thm:CE}}
		\begin{proof}
			Fix a constant level $\widetilde{\alpha} \equiv \alpha$ and again write
			\begin{align*}
				p_\alpha(\phi) := P(\phi(\alpha) = d_\alpha).
			\end{align*}	
			Since $\phi(\alpha) \in \{0, d_\alpha\}$,
			\begin{align*}
				\Ex^P[L(\phi(\alpha))]
					= L(0) + p_\alpha(\phi) (L(d_\alpha) - L(0)).
			\end{align*}
			Moreover, for a constant level, the conditional expectation itself is constant
			\begin{align*}
				\Ex^P[L(\phi(\widetilde{\alpha}))\mid\widetilde{\alpha}]
			        = \Ex^P[L(\phi(\alpha))].
			\end{align*}
			Since $\rho$ fixes constants,
			\begin{align*}
				\rho\left(\Ex^P[L(\phi(\widetilde{\alpha}))\mid\widetilde{\alpha}]\right)
					= L(0) + p_\alpha(\phi) (L(d_\alpha) - L(0)).
			\end{align*}
			Hence, at fixed levels, the $\rho$-notion of validity is equivalent to
			\begin{align*}
				p_\alpha(\phi)(L(d_\alpha) - L(0)) \leq C - L(0).
			\end{align*}
			This is exactly the condition studied in the expected loss case, so that the proof follows from the same reasoning as in Appendix \ref{sec:proof_expected_loss}.
		\end{proof}
		
	\section{Proof of Theorem \ref{thm:pin_CE}}
		\subsection{Technical lemma}
			To prove Theorem \ref{thm:pin_CE}, we first prove a technical lemma.
			The lemma shows that any bounded random variable $Y \geq L(0)$ satisfying a bound on its expectation can be replicated as the conditional expectation of a test with a data-dependent level $\phi(\widetilde{\alpha})$ that has certain properties.
			
			As the conditions of Theorem \ref{thm:pin_CE} require certain properties to hold for every test and data-dependent level, we can use this lemma to construct particular examples that lead to the claim of the theorem.
			
			\begin{lemma}\label{lem:replication}
				Assume that $\overline{L}(d_\alpha) = 1/\alpha$.
				Let $Y \geq L(0)$ be an arbitrary random variable that is bounded from above.
				\begin{enumerate}
					\item If $\Ex^P[Y] < C$, then there exists a pair $(\phi, \widetilde{\alpha})$ and a fixed level $a \in (0, 1)$ such that $Y = \Ex^P[L(\phi(\widetilde{\alpha})) \mid \widetilde{\alpha}]$, $P(\phi(a) = d_a) \leq a$ and $\phi(\widetilde{\alpha}) \leq \phi(a)$, pointwise.
					\item If $\Ex^P[Y] > C$, then there exists a pair $(\phi, \widetilde{\alpha})$ and a fixed level $a \in (0, 1)$ such that $\Ex^P[L(\phi(\widetilde{\alpha})) \mid \widetilde{\alpha}] = Y$ and $P(\phi(\widetilde{\alpha}) \geq d_a) > a$.
				\end{enumerate}
			\end{lemma}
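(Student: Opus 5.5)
The plan is to reduce to the normalized loss, then build $(\phi,\widetilde{\alpha})$ explicitly using the external randomization $U$. The level $\widetilde{\alpha}$ will be a strictly monotone function of $Y$, so that conditioning on $\widetilde{\alpha}$ is the same as conditioning on $Y$.

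\emph{Normalization.} Under the positive affine map $(L,C)\mapsto(\overline{L},1)$, set $\overline{Y} := (Y-L(0))/(C-L(0))$. Then $\overline{Y}\geq 0$, $\overline{Y}$ is bounded above by some $M<\infty$, and $\Ex^P[Y]<C$ (resp.\ $>C$) is equivalent to $\Ex^P[\overline{Y}]<1$ (resp.\ $>1$). Since conditional expectations commute with affine maps, it suffices to prove both claims with $L(0)=0$, $C=1$ and $L(d_\alpha)=1/\alpha$.

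\emph{Generic construction.} Fix a strictly increasing function $h:[0,M]\to(0,1)$, say $h(y)=(1+y)/(2+M)$. Put $\widetilde{\alpha} := a \pm \varepsilon h(Y)$, with $a,\varepsilon>0$ to be chosen. The map $y\mapsto a\pm\varepsilon h(y)$ is injective, so $\sigma(\widetilde{\alpha})=\sigma(Y)$, and we can recover $Y = g(\widetilde{\alpha})$ for a measurable inverse $g$. For $\alpha$ in the range of $\widetilde{\alpha}$, define
\begin{align*}
\phi(\alpha) := d_\alpha \ind{U\leq \alpha\, g(\alpha)},
\end{align*}
which requires $\alpha g(\alpha)\leq 1$. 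We guarantee this by taking $a+\varepsilon\leq 1/M$ and $a+\varepsilon<1$, and for all other $\alpha$ we define $\phi(\alpha)$ as we like. Since $U$ is independent of the data,
\begin{align*}
P(\phi(\widetilde{\alpha})=d_{\widetilde{\alpha}}\mid\widetilde{\alpha}) = \widetilde{\alpha} Y .
\end{align*}
Hence $\Ex^P[L(\phi(\widetilde{\alpha}))\mid\widetilde{\alpha}] = \widetilde{\alpha}Y/\widetilde{\alpha} = Y$, which gives the replication in both parts. Let $R:=\{U\leq \widetilde{\alpha}Y\}$ denote the rejection event, so that $P(R)=\Ex^P[\widetilde{\alpha}Y]$.

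\emph{Part 1} ($\Ex^P[Y]<1$). Take the plus sign, so $\widetilde{\alpha}\in(a,a+\varepsilon)$ strictly. The fixed level $a$ is then never attained by $\widetilde{\alpha}$, which leaves us free to define $\phi(a) := d_a\mathbb{I}_R$. On $R$ we have $\phi(\widetilde{\alpha}) = d_{\widetilde{\alpha}} \leq d_a = \phi(a)$, because $\widetilde{\alpha}>a$; off $R$ we have $\phi(\widetilde{\alpha})=0$. So the domination holds pointwise. Moreover
\begin{align*}
P(\phi(a)=d_a) = \Ex^P[\widetilde{\alpha}Y] \leq (a+\varepsilon)\Ex^P[Y] \leq a,
\end{align*}
where the last inequality holds once $\varepsilon\leq a(1-\Ex^P[Y])/\Ex^P[Y]$; if $\Ex^P[Y]=0$ it is trivial.

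\emph{Part 2} ($\Ex^P[Y]>1$). Take the minus sign, so $\widetilde{\alpha}\in(a-\varepsilon,a)$, with $\varepsilon<a$. On $R$ we have $\phi(\widetilde{\alpha})=d_{\widetilde{\alpha}}\geq d_a$, so
\begin{align*}
P(\phi(\widetilde{\alpha})\geq d_a)\geq \Ex^P[\widetilde{\alpha}Y]\geq (a-\varepsilon)\Ex^P[Y] > a,
\end{align*}
provided $\varepsilon < a(\Ex^P[Y]-1)/\Ex^P[Y]$.

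\emph{Main obstacle.} The delicate step is arranging the ordering between $\widetilde{\alpha}$ and the fixed level $a$ simultaneously with three other requirements. We need $\sigma(\widetilde{\alpha})=\sigma(Y)$ so that the conditional expectation equals $Y$ exactly. We need the rejection probability $\widetilde{\alpha}Y$ to stay at most $1$, which forces $a\leq 1/M$ and uses boundedness of $Y$. And in Part 1 we need $\widetilde{\alpha}$ to avoid $a$ entirely, so that $\phi(a)$ can be defined without clashing with the definition of $\phi$ on the range of $\widetilde{\alpha}$. Squeezing $\widetilde{\alpha}$ into a thin one-sided band of width $\varepsilon$ around $a$ handles all of these at once, and the strict inequality on $\Ex^P[Y]$ is what leaves room to choose $\varepsilon>0$.
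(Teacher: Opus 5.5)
Your proposal is correct and is essentially the paper's proof: normalize to $\overline{Y}$, encode it injectively into a thin one-sided band of levels next to $a$, reject when $U \leq \widetilde{\alpha}\overline{Y}$, and bound $P(R) = \Ex^P[\widetilde{\alpha}\overline{Y}]$ using the endpoints of the band. The only difference is cosmetic. The paper lets every $\phi(\alpha)$ reject on the same event $A$, so $\phi(\widetilde{\alpha}) \leq \phi(a)$ follows directly from $\widetilde{\alpha} \geq a$, and it need not keep $\widetilde{\alpha}$ strictly away from $a$ or define $\phi(a)$ separately as you do.
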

			\begin{proof}
				Define $\overline{Y} := (Y - L(0)) / (C - L(0))$.
				Since $Y \geq L(0)$ and $Y$ is bounded from above, there exists an $M > 0$ such that $0 \leq \overline{Y} \leq M$.
				
				We start by describing a randomized construction that is used to prove both claims.
				Let $\widetilde{\alpha}$ be a data-dependent level such that $\overline{Y}$ is measurable with respect to $\widetilde{\alpha}$ and $\overline{Y} \leq 1/\widetilde{\alpha}$.
				Recall the externally randomized random variable $U \sim \textnormal{Unif}[0, 1]$, independent from the $(\overline{Y}, \widetilde{\alpha})$, and define the event
				\begin{align*}
					A := \{\overline{Y} \geq U / \widetilde{\alpha}\}
				\end{align*}
				Now, define the family of tests that rejects at every level on the event $A$,
				\begin{align*}
					\phi(\alpha)
						:= d_\alpha 1_A, \quad \textnormal{ for every } \alpha \in (0, 1).
				\end{align*}
				As $U$ is independent, we have
				\begin{align*}
					P(A \mid \widetilde{\alpha}) = \widetilde{\alpha} \overline{Y}.
				\end{align*}
				Hence, by $\overline{L}(d_\alpha) = 1 / \alpha$, we have
				\begin{align*}
					\Ex^P[\overline{L}(\phi(\widetilde{\alpha})) \mid \widetilde{\alpha}]
						= P(A \mid \widetilde{\alpha}) / \widetilde{\alpha} = \overline{Y}.
				\end{align*}
				Equivalently, on the unnormalized scale,
				\begin{align*}
					\Ex^P[L(\phi(\widetilde{\alpha})) \mid \widetilde{\alpha}]
						= L(0) + (C - L(0)) \overline{Y}
						= Y.
				\end{align*}
				It therefore remains to choose $\widetilde{\alpha}$ in such a way that the desired properties in the claims hold.
				
				For the first claim, suppose that $\Ex^P[Y] < C$, which is equivalent to $\Ex^P[\overline{Y}] < 1$.
				Choose $\delta > 0$ such that $(1 + \delta) \Ex^P[\overline{Y}] \leq 1$, and select $a \in (0, 1)$ to be sufficiently small so that both $a (1 + \delta) < 1$ and $a (1 + \delta) M \leq 1$.
				We then define the data-dependent level
				\begin{align*}
					\widetilde{\alpha}
						:= a (1 + \delta \overline{Y} / M).
				\end{align*}
				
				This level satisfies $\widetilde{\alpha} \in [a, a (1 + \delta)]$, so that $\widetilde{\alpha} \geq a$.
				In addition, $\overline{Y}$ is measurable with respect to $\widetilde{\alpha}$, since
				\begin{align*}
					\overline{Y}
						= \frac{M}{\delta} \left(\frac{\widetilde{\alpha}}{a} - 1\right).
				\end{align*}
				Moreover, we have $\overline{Y} \leq 1/\widetilde{\alpha}$, since $\overline{Y} \leq M$ so that
				\begin{align*}
					\widetilde{\alpha}\overline{Y}
						= a (1 + \delta \overline{Y} / M) \overline{Y}
						\leq a (1 + \delta) M \leq 1.
				\end{align*}
				This means that the construction above gives $\Ex^P[L(\phi(\widetilde{\alpha})) \mid \widetilde{\alpha}] = Y$.
				Furthermore, 
				\begin{align*}
					P(\phi(a) = d_a)
						= P(A)
						= \Ex^P[\widetilde{\alpha}\overline{Y}]
						\leq a (1 + \delta) \Ex^P[\overline{Y}]
						\leq a.
				\end{align*}
				Finally, because $\widetilde{\alpha} \geq a$, we have that a rejection at level $\widetilde{\alpha}$ is weaker than a rejection at level $a$.
				As a consequence, $\phi(\widetilde{\alpha}) \leq \phi(a)$, pointwise.
				This proves the first claim.
				
				For the second claim, suppose that $\Ex^P[Y] > C$, which is equivalent to $\Ex^P[\overline{Y}] > 1$.
				We now choose $\delta \in (0, 1)$ such that $(1 - \delta) \Ex^P[\overline{Y}] > 1$.
				Then choose $a \in (0, 1)$ to be sufficiently small such that $a M \leq 1$.
				We then define the data-dependent level
				\begin{align*}
					\widetilde{\alpha}
						:= a (1 - \delta \overline{Y} / M),
				\end{align*}
				
				This level satisfies $\widetilde{\alpha} \in [a (1 - \delta), a]$ so that $\widetilde{\alpha} \leq a$.
				Moreover, $\widetilde{\alpha} \overline{Y} \leq a M \leq 1$, and $\overline{Y}$ is measurable with respect to $\widetilde{\alpha}$, since
				\begin{align*}
					\overline{Y} = \frac{M}{\delta} \left(1 - \frac{\widetilde{\alpha}}{a}\right),
				\end{align*}
				so that the construction above gives $\Ex^P[L(\phi(\widetilde{\alpha})) \mid \widetilde{\alpha}] = Y$.
				Since $\widetilde{\alpha} \leq a$, a rejection at the data-dependent level is at least as strong as a rejection at level $a$.
				Hence,
				\begin{align*}
					\{\phi(\widetilde{\alpha}) \geq d_a\}
						= A.
				\end{align*}
				As a consequence,
				\begin{align*}
					P(\phi(\widetilde{\alpha}) \geq d_a)
						= P(A)
						= \Ex^P[\widetilde{\alpha}\overline{Y}]
						= a \Ex^P\left[(1 - \delta \frac{\overline{Y}}{M})\overline{Y}\right]
						\geq a (1 - \delta) \Ex^P[\overline{Y}]
						> a.
				\end{align*}
				This proves the second claim.
			\end{proof}
	
		\subsection{Proof of the theorem}
			\begin{proof}[Proof of Theorem \ref{thm:pin_CE}]
				To start, by nesting classical validity, Theorem \ref{thm:CE} pins the normalized loss to $\overline{L}(d_\alpha) = 1/\alpha$, $\overline{L}(0) = 0$.
				
				We now prove the core of the result.
				Let $Y$ be an arbitrary $[L(0), \infty]$-valued random variable.
				We will show that
				\begin{align}\label{iff:core}
					\rho(Y) \leq C \iff \Ex^P[Y] \leq C.
				\end{align}
				
				We start by assuming $Y$ is bounded, and later lift this using continuity from below.
				We split the proof into three cases: $\Ex^P[Y] < C$, $\Ex^P[Y] > C$ and $\Ex^P[Y] = C$.
				
				Suppose that $\Ex^P[Y] < C$.
				By Lemma \ref{lem:replication}, we can manufacture a test $\phi(\widetilde{\alpha})$ at some data-dependent level $\widetilde{\alpha}$ with $\Ex^P[L(\phi(\widetilde{\alpha})) \mid \widetilde{\alpha}] = Y$, whose probability to reject at level $a$ is bounded by $a$, and $\widetilde{\alpha} \geq a$.
				By nesting classical validity, $\phi$ is therefore valid for the constant data-dependent level $\widetilde{\alpha}' \equiv a$.
				Since $\widetilde{\alpha} \geq a$, we have that $\phi(\widetilde{\alpha}) \leq \phi(\widetilde{\alpha}')$ for $\widetilde{\alpha}' \equiv a$.
				By the Axiom \ref{axiom:monotonicity}, $\phi$ must therefore be valid at level $\widetilde{\alpha}$, and so $\rho(Y) \leq C$.
				
				Next, suppose that $\Ex^P[Y] > C$.
				For the sake of contradiction, suppose $\rho(Y) \leq C$.
				Lemma \ref{lem:replication} then allows us to manufacture a test $\phi(\widetilde{\alpha})$ at some data-dependent level $\widetilde{\alpha}$, whose probability of making a rejection at least as strong as $d_a$ exceeds $a$.
				Since $\rho(Y) \leq C$, this selected test would be valid, contradicting Axiom \ref{axiom:preserving}.
				As a consequence, $\rho(Y) \leq C \implies \Ex^P[Y] \leq C$.
				
				Finally, suppose that $\Ex^P[Y] = C$.
				We approximate $Y$ from below by
				\begin{align*}
					Y_n 
						:= L(0) + (1 - 1/n) (Y - L(0)) 
						\leq Y,
				\end{align*}
				for $n \geq 1$.
				Then $Y_n \uparrow Y$ and $\Ex^P[Y_n] < C$ for every $n$.
				Hence, the $\Ex^P[Y] < C$-case above gives $\rho(Y_n) \leq C$.
				By continuity from below, $\rho(Y_n) \uparrow \rho(Y)$ so that $\rho(Y) \leq C$.
				
				Combining these three cases proves \eqref{iff:core} for bounded $Y$.
				We now use continuity from below to lift boundedness to the general case.
				Let $Y$ be $[L(0), \infty]$-valued, write $H = (Y - L(0)) / (C - L(0))$, and for $n \geq 1$, define
				\begin{align*}
					Y_n' := L(0) + (C - L(0)) (H \wedge n) \leq Y.
				\end{align*}
				
				Suppose first that $\Ex^P[Y] \leq C$.
				Since $Y_n'$ is bounded and $\Ex^P[Y_n'] \leq C$, the bounded comparison gives $\rho(Y_n') \leq C$ for every $n \geq 1$.
				Since $Y_n' \uparrow Y$, continuity from below gives $\rho(Y) \leq C$.
				
				Conversely, suppose $\rho(Y) \leq C$.
				For the sake of contradiction, assume that $\Ex^P[Y] > C$.
				By monotone convergence, there exists an $n'$ such that $\Ex^P[Y_{n'}'] > C$.
				Since $Y_n' \uparrow Y$, continuity from below gives $\rho(Y_n') \uparrow \rho(Y)$.
				In particular, because $\rho(Y) \leq C$, we must have $\rho(Y_{n'}') \leq C$.
				But $Y_{n'}'$ is bounded and $\Ex^P[Y_{n'}'] > C$, contradicting the bounded comparison above.
				Hence, $\Ex^P[Y] \leq C$.
				
				This completes the proof of the core comparison \eqref{iff:core}.
				We now apply this to an arbitrary test $\phi$ and data-dependent level $\widetilde{\alpha}$ by taking
				\begin{align*}
					Y = \Ex^P[L(\phi(\widetilde{\alpha})) \mid \widetilde{\alpha}].
				\end{align*}
				By the defined notion of validity \eqref{ineq:validity_general} and \eqref{iff:core},
				\begin{align*}
					\phi \textnormal{ is valid for } \widetilde{\alpha}
						\iff \Ex^P[Y] \leq C.
				\end{align*}
				By the tower property, $\Ex^P[Y] = \Ex^P[L(\phi(\widetilde{\alpha}))]$, so that validity is equivalent to
				\begin{align*}
					\Ex^P[L(\phi(\widetilde{\alpha}))] \leq C.
				\end{align*}
				Normalizing to $\overline{L}$ and $1$ yields the claim.
			\end{proof}
			
	\section{Proof of Theorem \ref{thm:post-hoc_valid}}
		\begin{proof}
			We start by observing some simple consequences of the definition of closure.
			For every data-dependent level $\widetilde{\alpha}$,
			\begin{align}\label{ineq:proof_closure}
				\overline{\phi}(\widetilde{\alpha})
					\leq \e_\phi.
			\end{align}	
			Indeed, on the event $\{\e_\phi\geq d_{\widetilde{\alpha}}\}$ we have $\overline{\phi}(\widetilde{\alpha}) = d_{\widetilde{\alpha}} \leq \e_\phi$, while on its complement $\overline{\phi}(\widetilde{\alpha}) = 0 \leq \e_\phi$.
			Moreover, by a squeezing-argument we have
			\begin{align}\label{eq:proof_squeeze}
				\e_{\overline{\phi}}
					\equiv \sup_{\alpha \in (0, 1)} \overline{\phi}(\alpha)
					= \sup_{\alpha \in (0, 1)} \phi(\alpha)
					\equiv \e_\phi,
			\end{align}
			as $\phi(\alpha) \leq \overline{\phi}(\alpha) \leq \e_\phi$ for every $\alpha$.
			
			We now first prove an intermediate result that post-hoc validity of $\phi$ implies validity of $\e_\phi$.
			By post-hoc approximability, there exists a sequence of data-dependent levels $(\widetilde{\alpha}_n)_{n \geq 1}$ such that $\phi(\widetilde{\alpha}_n)$ is increasing and
			\begin{align*}
				\phi(\widetilde{\alpha}_n)
					\uparrow \sup_{\alpha \in (0, 1)} \phi(\alpha)
					= \e_\phi.
			\end{align*}
			Since $\phi$ is post-hoc valid, $\phi(\widetilde{\alpha}_n)$ is valid for every $n \geq 1$.
			By continuity from below, $\e_\phi$ is valid.
			
			We now use this to prove that $\overline{\phi}$ is post-hoc valid.
			As $\e_\phi$ is valid and $\overline{\phi}(\widetilde{\alpha}) \leq \e_\phi$ pointwise by \eqref{ineq:proof_closure} for a data-dependent level $\widetilde{\alpha}$, E-value monotonicity implies that $\overline{\phi}$ is valid for $\widetilde{\alpha}$.
			As this holds for every $\widetilde{\alpha}$, $\overline{\phi}$ is post-hoc valid.

			It remains to prove the equivalence between post-hoc validity of $\overline{\phi}$ and the validity of its associated E-value.
			If $\e_{\overline{\phi}}$ is valid, then $\overline{\phi}(\widetilde{\alpha}) \leq \e_{\overline{\phi}}$ for every $\widetilde{\alpha}$, so that $\overline{\phi}$ is post-hoc valid by monotonicity.
			
			Conversely, suppose that $\overline{\phi}$ is post-hoc valid.
			Since $\phi(\widetilde{\alpha}_n) \leq \overline{\phi}(\widetilde{\alpha}_n)$, monotonicity implies that every $\phi(\widetilde{\alpha}_n)$ is valid.
			As $\phi(\widetilde{\alpha}_n) \uparrow \e_\phi$, continuity from below gives validity of $\e_\phi$.
			Since $\e_{\overline{\phi}} = \e_\phi$ by \eqref{eq:proof_squeeze}, this proves the validity of $\e_{\overline{\phi}}$.
		\end{proof}

	\section{Proof of Proposition \ref{prp:distinguishing_non-rejections_expected_loss}}
		\begin{proof}
			We start by showing that nesting forces
			\begin{align}\label{eq:generalized_loss}
				L(d_\alpha) = L(n_\alpha) + \frac{C - L(n_\alpha)}{\alpha},
			\end{align}
			instead of $L(d_\alpha) = 1 / \alpha$.
			Indeed, if a fixed level-$\alpha$ test rejects with probability $p$, then nesting requires
			\begin{align*}
				p L(d_\alpha) + (1-p)L(n_\alpha) \leq C
				\iff p \leq \alpha.
			\end{align*}
			Since, by external randomization, every $p \in [0,1]$ is attainable, requiring this to hold for every $\alpha$ yields
			\begin{align*}
				\alpha L(d_\alpha) + (1 - \alpha)L(n_\alpha) = C,
			\end{align*}
			which corresponds to \eqref{eq:generalized_loss}.
		
			Adding the preservation axiom now forces all non-rejections to have the same loss across different levels.
			Indeed, for the sake of contradiction, suppose that there exist $\alpha, \beta \in (0,1)$ such that
			\begin{align*}
				L(n_\beta) < L(n_\alpha).
			\end{align*}
			Let $A$ be an event with $P(A) = p$ and define
			\begin{align*}
				\psi(\gamma)
					= d_\gamma \mathbb{I}(A) + n_\gamma \mathbb{I}(A^c), \quad \gamma \in (0, 1).
			\end{align*}
			Now consider the data-dependent level $\widetilde{\alpha} = \alpha \mathbb{I}(A) + \beta \mathbb{I}(A^c)$, 	so that
			\begin{align*}
				\psi(\widetilde{\alpha})
					= d_\alpha \mathbb{I}(A) + n_\beta \mathbb{I}(A^c).
			\end{align*}
			At $p = \alpha$, its expected loss equals
			\begin{align*}
				\alpha L(d_\alpha) + (1 - \alpha) L(n_\beta)
					&= \alpha L(d_\alpha) + (1 - \alpha) L(n_\alpha) + (1 - \alpha) (L(n_\beta) - L(n_\alpha)) \\
					&= C + (1 - \alpha) (L(n_\beta) - L(n_\alpha))
					< C,
			\end{align*}
			since $L(n_\beta) < L(n_\alpha)$ by assumption.
			This means that we can find some $p > \alpha$ close enough to $\alpha$ such that
			\begin{align*}
				\Ex^P[L(\psi(\widetilde{\alpha}))]
					\leq C,
			\end{align*}
			so that $\psi$ is valid for $\widetilde{\alpha}$, but rejects at level $\alpha$ with probability $p > \alpha$:
			\begin{align*}
				P(\psi(\widetilde{\alpha}) \geq d_\alpha)
					= p > \alpha,
			\end{align*}
			violating the preservation axiom.
		
			As this holds for any $\alpha, \beta \in (0, 1)$, the preservation axiom rules out $L(n_\alpha) > L(n_\beta)$ and $L(n_\beta) > L(n_\alpha)$, so that
			\begin{align*}
				L(n_\alpha) = L(n_\beta), \quad \textnormal{ for all } \alpha, \beta \in (0, 1).
			\end{align*}
		\end{proof}

\end{document}